\newtheorem*{starthma}{Theorem A}
\newtheorem*{starthmb}{Theorem B}
\newtheorem{theorem}{Theorem}
\newtheorem{lemma}{Lemma}
\newtheorem{remark}{Remark}
\newtheorem{conjecture}{Conjecture}
\newtheorem{example}{Example}
\newtheorem{question}[theorem]{Question}
\theoremstyle{definition}
\newtheorem{definition}{Definition}
\newtheorem{problem}{Problem}
\def\m{\mu}
\def\n{\nu}
\def\T{\mathbb{T}}
\def\S{\Sigma}
\def\s{\sigma}
\def\B{\mathcal{B}}
\def\P{\mathcal{P}}
\def\N{\mathbb{N}}
\def\Si{\Sigma} 
\def\s{\sigma} 
\def\P{\mathcal{P}}
\title[Geometry in the Furstenberg Conjecture]{Geometry in the Furstenberg Conjecture}
\author{Yunping JIANG}
\address{Yunping Jiang: Department of Mathematics, 
Queens College of the City University of New York,
Flushing, NY 11367-1597 and 
Department of Mathematics
Graduate School of the City University of New York
365 Fifth Avenue, New York, NY 10016}
\email{yunping.jiang@qc.cuny.edu}
 \subjclass[2010]{Primary 37A05, 37E05; Secondary 28D20, 28D05}
\keywords{the Furstenberg conjecture, balanced geometry, Lebesgue measure, invariant measure}
\begin{document}

\begin{abstract}
We explore the geometric aspects of the Furstenberg conjecture, proving that a non-atomic probability measure on the unit circle, invariant under both $p$- and $q$-actions for coprime integers $p,q>1$, must be the Lebesgue measure if it exhibits balanced geometry for one of these actions. Within rigidity theory, we show that balanced geometry is equivalent to the Lipschitz property. A consequence is that an orientation-preserving homeomorphism of the circle conjugating both $p$- and $q$-actions and preserving the Lebesgue measure must be the identity if one of these conjugations satisfies the Lipschitz property.
Our approach does not rely solely on ergodicity, and we conclude by proposing conjectures and open problems that frame the Furstenberg conjecture through geometric and quasisymmetric perspectives.
\end{abstract}

\maketitle


\section{The Statement of Main Result}~\label{bgsect}

Let $\mathbb{R}$ denote the real line and let $\mathbb{Z}$ denote the set of integers. We define $\mathbb{T}=\mathbb{R}/\mathbb{Z}$ as the unit circle and $\mathcal{B}$ as the standard Borel $\sigma$-field on $\mathbb{T}$. The Lebesgue probability measure on $\mathbb{T}$ is denoted by $\mu_{0}(A) =\int_{A} dx$ for $A\in \mathcal{B}$, where $dx$ denotes the Lebesgue metric on $\mathbb{R}$. Let $\mu$ be a non-atomic probability Borel measure on $\mathbb{T}$, referred to as a measure. 
Denote $I=[a,b]$ as an arc on $\mathbb{T}$ obtained by the interval $[a,b]$ in $\mathbb{R}$ modulo $1$ and $|I|=\mu_{0}(I)$ as the length of the arc. 

For $p>1\in \mathbb{N}$, consider the $p$-action on $\mathbb{T}$ defined as
$$
f_{p}: x\mapsto px \pmod{1}.
$$ 
The $p$-action induces a partition on $\mathbb{T}$ into intervals
 $$
\eta_{1}=\Big\{ I_{i}=\Big[\frac{i}{p}, \frac{i+1}{p}\Big]\;\Big|\; 0\leq i\leq p-1\Big\}.
 $$
The $n$-iteration $f^{n}_{p}$ induces a partition on $\mathbb{T}$ into intervals
$$
\eta_{n} =\{ I_{w_{n}}\;|\; w_{n}\in \Si_{n}\}, \quad n\in \N,
$$
where 
$$
\Si_{n}= \{w_{n}=i_{0}i_{1}\cdots i_{k}\cdots i_{n-1}\;|\; i_{k}\in \{0, 1, \cdots, p-1\}, 0\leq k\leq n-1\}
$$ 
denotes the set of all sequences of length $n$ over the alphabet ${0, 1, \cdots, p-1}$ and 
$
I_{w_{n}} =I_{i_{0}}\cap f^{-1}_{p} (I_{i_{1}})\cap \cdots \cap f^{-(n-1)}_{p} (I_{i_{n-1}}).
$

Consider the shift:
$$
\s : w_{n} \in \Si_{n}\mapsto \s(w_{n}) = i_{1}\cdots i_{n-1}\in \Si_{n-1}.
$$
Then
$$
f_{p}: I_{w_{n}}\in \eta_{n}\to I_{\s(w_{n})}\in \eta_{n-1}.
$$ 
For any $w_{n-1}=i_{0}i_{1}\cdots i_{n-2}\in \Si_{n-1}$, we have that 
$$
I_{w_{n-1}}= \cup_{i=0}^{p-1} I_{w_{n-1}i}\quad \hbox{and}\quad
f^{-1}_{p}(I_{w_{n-1}})=\cup_{i=0}^{p-1} I_{iw_{n-1}}.
$$

For a given measure $\mu$, define
$$
\alpha (n-1)= \min_{w_{n-1}\in \Si_{n-1}, 0\leq i\leq p-1} \frac{\sum_{0\leq j\not=i\leq p-1} \m (I_{j w_{n-1}})}{\m(I_{iw_{n-1}})}.
$$
and 
$$
\beta (n-1) = \max_{w_{n-1}\in \Si_{n-1}, 0\leq i\leq p-1} \frac{\mu (I_{w_{n-1}})}{\mu(I_{w_{n-1}i})}
$$
\medskip
\begin{definition}[Balanced Geometry]~\label{bgeo}
We say that a measure $\m$ exhibits {\em balanced geometry} if there is a constant $C>0$ such that
$$
\alpha (n-1)\geq C, \quad \forall\;\; n\geq 1.
$$
\end{definition}

\medskip
\begin{definition}[Bounded Geometry]~\label{bdgeo}
We say that a measure $\m$ exhibits {\em bounded geometry} if there is a constant $C>0$ such that
$$
\beta (n-1) \leq C, \quad \forall\;\; n\geq 1.
$$
\end{definition}

\medskip
\begin{definition}[Quasisymmetry]~\label{qscc}
We say that a measure $\m$ is {\em quasisymmetric} 
if there is a constant $M\geq 1$ such that 
$$
M^{-1} \leq \frac{\mu([x, x+t])}{\mu([x-t,x])} \leq M, \quad \forall\;\; x\in \mathbb{T}, \;\; \forall \;t>0.
$$
\end{definition}

\medskip
\begin{remark}
The quasisymmetric condition in Definition~\ref{qscc} is stronger than the bounded geometry condition in Definition~\ref{bdgeo}; if $\mu$ is quasisymmetric, then it exhibits bounded geometry.   
\end{remark}

%

\medskip
\begin{definition}
A measure $\m$ is said to be $p$-invariant if
$$
\m( f^{-1}_{p}(A)) =\m (A), \quad \forall\;A\in \B.
$$
\end{definition}

The Lebesgue measure $\m_{0}$ is $p$-invariant for any $p>1$ and exhibits balanced geometry. We are interested in the Furstenburg conjecture as follows.

\medskip
\begin{conjecture}~\label{fcc}
If a measure $\m$ is both $p$- and $q$-invariant for a coprime pair of integers $p, q>1$ and {\em ergodic} with respect to the semigroup $<f_{p}, f_{q}>$ generated by $f_{p}$ and $f_{q}$, then it must be the Lebesgue measure $\mu_{0}$.
\end{conjecture} 

\medskip
\begin{remark} 
In the conjecture, $\mu$ is ergodic with respect to the semigroup $<f_{p}, f_{q}>$ generated by $f_{p}$ and $f_{q}$ means that for any $A\in \mathcal{B}$ with $f_{p}^{-1}(A)=A$ and $f_{q}^{-1}(A) =A$, then $\mu (A)=0$ or $1$.
\end{remark}

With an additional condition on metric entropy, Rudolph~\cite{R} proved that.

\begin{starthma} 
If a measure $\m$ is both $p$- and $q$-invariant for a coprime pair of integers $p, q>1$ and {\em ergodic} with respect to the semigroup $<f_{p}, f_{q}>$ generated by $f_{p}$ and $f_{q}$ and the metric entropy $h_{\mu}(f_{p})>0$, then it must be the Lebesgue measure $\mu_{0}$.
\end{starthma}

With a stronger condition on ergodicity and conservative, Host~\cite{H} proved that

\begin{starthmb} 
If a measure $\m$ is both $p$- and $q$-invariant for a coprime pair of integers $p, q>1$ and {\em ergodic} and {\em conservative} concerning $f_{p}$, then it must be the Lebesgue measure $\mu_{0}$.
\end{starthmb}

This paper will prove the following theorem under the balanced geometry without assuming ergodicity. 

\medskip
\begin{theorem}[Main Theorem]~\label{main}
If a measure $\m$ is both $p$- and $q$-invariant for a coprime pair of integers $p, q>1$ and exhibits balanced geometry for one of these actions, it must be the Lebesgue measure $\mu_{0}$.
\end{theorem} 

We are still working on the following conjecture under the bounded geometry without assuming ergodicity (see~\cite{AJ}).

\medskip
\begin{conjecture}~\label{bgconj}
If a measure $\m$ is both $p$- and $q$-invariant for a coprime pair of integers $p, q>1$ and exhibits bounded geometry for one of these actions, it must be the Lebesgue measure $\mu_{0}$.
\end{conjecture}

\medskip

The following conjecture plus Theorem A implies Conjecture~\ref{fcc}. 
\medskip
\begin{conjecture}~\label{qsconj}
If a measure $\m$ is both $p$- and $q$-invariant for a coprime pair of integers $p, q>1$ and ergodic concerning the semigroup $<f_{p}, f_{q}>$ generated by $f_{p}$ and $f_{q}$, then it is quasisymmetric.
\end{conjecture}

However, we believe that the following problem is reasonable. 

\medskip
\begin{problem}~\label{cexample}
Construct a measure which is $p$- and $q$-invariant for a coprime pair of integers $p, q>1$ and which is non-atomic and non-ergodic concerning the semigroup $<f_{p}, f_{q}>$ generated by $f_{p}$ and $f_{q}$. 
\end{problem}

\medskip
\begin{remark}
The measure in Problem~\ref{cexample} should not be quasisymmetric. Thus, it must exhibit some parabolic phenomenon like the one we will construct in Example~\ref{nex}. 
\end{remark}

The structure of the paper is as follows: Section~\ref{prsect} offers a detailed proof of the main result (Theorem~\ref{main}). We show the equivalence between the balanced geometry for a measure and the Lipschitz property on an orientation-preserving circle covering in Section~\ref{rsect}. Leveraging this equivalence, Section~\ref{rsect} establishes another result (Theorem~\ref{main2}) confirming Question~\ref{q}. Finally, Section~\ref{rsect} presents a non-rigidity problem when it lacks ergodicity and restates the Furstenberg conjecture as a conjecture in rigidity (Conjecture~\ref{fc}), breaking it into two conjectures involving quasisymmetry (Conjecture~\ref{qsc}) and bounded geometry (Conjecture~\ref{rbgc}) on a homeomorphism of the circle.
We present in Section~\ref{enx} an example of a measure exhibiting balanced geometry and a counter-example of a measure exhibiting imbalanced geometry, which will be helpful for us to study further Problem~\ref{p} and Conjecture~\ref{qsc}. Two appendixes (Appendix I and Appendix II) will provide detailed proof of a result in number theory and state the martingale convergence theorem, both of which will be used to prove the main result. 


\section{\bf The Proof of the Main Result (Theorem~\ref{main})}~\label{prsect}

 Suppose $\mu$ exhibits balanced geometry for the $p$-action. Let $\nu_{0}=\m$ and, for any $n>0$, define 
$$
\nu_{n} (A) = \frac{1}{p^{n}}\sum_{k=0}^{p^{n}-1} \m \Big(A+ \frac{k}{p^{n}}\Big), \quad A\in \B, \;\;n\geq 1,
$$
which is a measure. 

\medskip
\begin{remark}
Consider the Ruelle operator associated with the dynamical system $f_{p}(x)$ and the potential $\psi (x) =-\log f_{p} (x)$,
$$
{\mathcal L} \phi (x) =\sum_{y\in f_{p}^{-1}(x)} e^{\psi (y)} \phi (y): C(\T) \to C(\T),
$$
where $C(\T)$ means the space of all continuous functions on $\T$. Since ${\mathcal L}1=1$, its dual operator
${\mathcal L}^{*}$ maps the space $M_{1} (\T)$ of all probability measures on $\T$ into itself. We have that 
$$
{\mathcal L}^{n} \phi (f^{n}_{p}x)  = \frac{1}{p^{n} } \sum_{k=0}^{p^{n}-1}  \phi \Big(x+\frac{k}{p^{n}}\Big)
$$
whose dual operator $({\mathcal L}^{n}\circ f^{n}_{p})^{*}$ mapping $M_{1} (\T)$ into itself.  Then
$$
\nu_{n} =  ({\mathcal L}^{n} \circ f^{n}_{p})^{*} \mu, \quad \forall\; n>1.
$$
\end{remark}

 Let $g_{i}: [0,1]\to I_{i}$, $0\leq i\leq p-1$, be the inverse branches of $f_{p}$. For any $w_{n}=i_{0} i_{1}\cdots i_{n-1}\in\Si_{n}$, let
$
g_{w_{n}}= g_{i_{0}}\circ \cdots \circ g_{i_{n-1}}.
$
Then for $k=\sum_{j=0}^{n-1} i_{j}p^{j}$,
$$
g_{w_{n}} (x) =\frac{x}{p^{n}} +\frac{k}{p^{n}} \pmod{1}.
$$
When $A$ is contained in an interval in $\eta_{n}$, we have
$$
\nu_{n} (A) = \frac{1}{p^{n}} \sum_{w_{n}\in \Si_{n}} \m (g_{w_{n}} (f^{n}_{p} (A)))= \frac{1}{p^{n}} \m (f_{p}^{-n} (f^{n}_{p} (A)))=\frac{1}{p^{n}} \m (f^{n}_{p} (A))
$$
since $\m$ is $p$-invariant. 
Since $\nu_{n-1} (A) \leq p \nu_{n} (A)$ for all $n\geq 1$, $\nu_{n-1}$ is absolutely continuous with respect to $\nu_{n}$, thus we have the Radon-Nikodym derivative
$
\phi_{n} =d\m/d\nu_{n}.
$

For $\nu_{n}$-a.e. $x_{w}$ where $w=w_{k}\cdots =i_{0}i_{1}\cdots i_{k-1}\cdots \in \S$ and for any $k>n>m\geq 0$, we have that
$$
\frac{\nu_{m} (I_{w_{k}})}{\nu_{m+1} (I_{w_{k}})}= \frac{\frac{1}{p^{m}} \m (f_{p}^{-m}(f_{p}^m(I_{w_{k}})))}{\frac{1}{p^{m+1}} \m (f_{p}^{-(m+1)}(f_{p}^{(m+1)}(I_{w_{k}})))}
$$
$$
=\frac{p\m (I_{\s^{m}(w_{k})})}{\m (f_{p}^{-1}(f_{p}(I_{\s^{m}(w_{k})})))}
=\frac{p \m (I_{\s^{m}(w_{k})})}{\sum_{j=0}^{p-1} \m (I_{j\s^{m+1} (w_{k})})}
$$
$$
=\frac{p}{1+\frac{\sum_{0\leq j\leq p-1, j\s^{m+1}(w_{k}) \not= \s^{m}(w_{k})} \m (I_{j\s^{m+1}(w_{k})})}{\m (I_{\s^{m}(w_{k})})}}
\leq \frac{p}{1+\alpha (k-m-1)}.
$$
Therefore, we get 
$$
\frac{\m (I_{w_{k}})}{\nu_{n} (I_{w_{k}})} =\prod_{m=0}^{n-1} \frac{\nu_{m} (I_{w_{k}})}{\nu_{m+1} (I_{w_{k}})}\leq \frac{p^{n}}{\prod_{m=0}^{n-1}(1+\alpha (k-m-1)} \leq \frac{p^{n}}{(1+C)^{n}},
$$
where $C>0$ is the constant in the definition of balanced geometry,
and
\begin{equation}~\label{bd}
\phi_{n} (x) =\lim_{k\to \infty} \frac{\m(I_{w_{k}})}{\nu_{n} (I_{w_{k}})} \leq \frac{p^{n}}{(1+C)^{n}} \quad \hbox{for $\nu_{n}-a.e. \; x\in \mathbb{T}$}
\end{equation}
(refer to Lemma~\ref{mct} in Appendix II).

Let $B_{n}=\{ x\in \T\;|\; \phi_{n}(x) \hbox{ is not defined}\}$. We have $\m(B_{n})\leq \nu_{n} (B_{n})=0$, Consider 
$$
A_{n} = \{ x\in \T\setminus B_{n}\;|\; \phi_{n}(x) =0\} =\phi_{n}^{-1} (0).
$$
It is measurable, and 
$$
\m (A_{n}) =\int_{A_{n}} \; d\m (x) = \int_{A_{n}} \phi_{n}(x) \; d\n_n (x)=0.
$$
Let  
$$
K=\{x\;|\; \phi_n(x)>0\mbox{ for all }n\geq1\} =\bigcap_{n\geq 1} (A_{n}\cup B_{n})^{c} = \Big( \bigcup_{n=1}^{\infty} (A_{n}\cup B_{n}) \Big)^{c}
$$
Then we have that $\m (K)=1$. 
On $K$, we have 
$$
d\n_n(x)=\frac{1}{\phi_n(x)}\;d\m (x), \quad \forall\; n\geq 1.
$$

The measure $\mu$ is the Lebesgue measure $\mu_{0}$ if and only if its Fourier coefficient $f_{m}=\int_{\T} e^{2\pi i m x} d\m (x)=0$ for all integers $m\geq 1$. Denote $e(x) =e^{2\pi i x}$ and let us fix an integer $m\geq 1$. Let $T_{n}$ and $C_{1}$ and $n_{0}$ be the numbers in Lemma~\ref{nt} in Appendix I.

For each $n>n_{0}$, we define
$$
\beta_{T_{n}} (x)=\frac{1}{T_{n}}\sum_{k=0}^{T_{n}-1}e(mq^kx).
$$
Then 
$$
 \int_{K}\frac{|\beta_{T_{n}}(x)|^2}{\phi_n(x)}\,d\mu(x) = \int_{K} |\beta_{T_{n}}(x)|^2\,d\nu_n(x)
$$
$$
=\frac{1}{p^{n}} \int_{K} \sum_{k=0}^{p^{n}-1} \Big| \beta_{T_{n}}\Big(x+\frac{k}{p^{n}}\Big)\Big|^2\,d\mu(x)=\int_{K} h(x)\,d\mu(x)=\int_{\T}h(x)\,d\m(x),
$$
where
$$
h(x)=\frac{1}{p^{n}} \frac{1}{T_{n}^2}\sum_{k=0}^{T_{n}-1}\sum_{l=0}^{T_{n}-1}e(m(q^k-q^l)x)\sum_{j=0}^{p^n-1}e\Big(m\Big(\frac{(q^k-p^l)j}{p^{n}}\Big) \Big).
$$
If $0\leq k\neq l\leq T_{n}$, then $q^k\neq q^l\mod p^n$, which implies that 
$$
\sum_{j=0}^{p^n-1}e\Big(m\Big( \frac{(q^k-q^l)j}{p^{n}}\Big)\Big)=0.
$$
So we have $h(x)=1/T_{n}$, and $\int_{\T} h(x)d\mu(x)=1/T_{n}$, which  implies that 
$$
\int_{K}\frac{|\beta_{T_{n}}(x)|^2}{\phi_n(x)}\,d\mu(x) = \frac{1}{T_{n}}.
$$
Using the Cauchy-Schwarz inequality, 
$$
\left|\int_{\T} \beta_{T_n}(x)\,d\mu(x)\right|^2=\left|\int_{K} \beta_{T_n}(x)\,d\mu(x)\right|^2 \leq\int_{K}\frac{|\beta_{T_n}(x)|^2}{\phi_n(x)}\,d\mu(x)\cdot\int_{K}\phi_n(x)\,d\mu(x)
$$
$$
=\frac{1}{T_n}\int_{K}\phi_n(x)\,d\mu(x) =\frac{1}{C_{1}p^{n}} \int_{K}\phi_n(x)\,d\mu(x)=\frac{1}{C_{1}p^{n}} \int_{\T}\phi_n(x)\,d\mu(x).
$$
From (\ref{bd}), we have 
$$
\Big| \int_{\T} \beta_n(x)\,d\mu(x)\Big| \leq \frac{1}{\sqrt{C_{1}}(1+C)^{\frac{n}{2}}}\to 0 \quad \hbox{as} \quad n\to \infty.
$$
Since $\m$ is $q$-invariant, 
$$
\int_{\T} \beta_{T_n}(x)\,d\mu(x)=\int_\T e(mx)\,d\mu(x)=f_{m}, \;\; \forall \; n\geq n_{0}.
$$ 
We get $f_{m}=0$, 
which completes the proof of the theorem.

\medskip
\begin{remark}
We do not need the ergodicity assumption on $\m$ in the proof, while the proofs in~\cite{R,J,H} reply to this assumption. 
The balanced geometry and the ergodicity on $\m$ imply the positivity of the metric entropy. See other interesting discussions  in~\cite[Chapter 9]{ELW} and~\cite{AA}).
 \end{remark}
 
 \section{\bf Rigidity}~\label{rsect}

Let $h$ be an orientation-preserving homeomorphism of $\mathbb{T}$ with $h(0)=0$ and $p>1\in \mathbb{N}$. Define
\[
F_{p}= h\circ f_{p}\circ h^{-1}: \mathbb{T}\to \mathbb{T}.
\]
It is a degree $p$ topological covering of $\mathbb{T}$. Let 
\[
\{ G_{i} =(F|h(I_{i}))^{-1}: [0,1] \to h(I_{i})\;|\; 0\leq i\leq p-1\}
\] 
be the set of inverse branches of $F_{p}$. We say $F_{p}$ preserves the Lebesgue measure if $(F_{p})_{*}\mu_{0}=\mu_{0}$, equivalently, 
\[
\mu_{0} (F_{p}^{-1}(A)) =\mu_{0} (A), \quad \forall A\in \mathcal{B}.
\]
We say that $F_{p}$ has the Lipschitz property if all $G_{i}$ are Lipschitz functions with the Lipschitz constant less than $1$ for $0\leq i\leq p-1$. In his recent presentation at Stony Brook~\cite{S}, Sullivan posed a query in rigidity. 

\medskip
\begin{question}~\label{q}
Suppose $F_{p}$ and $F_{q}$ both preserve the Lebesgue measure for a coprime pair of integers $p>1$ and $q>1$, and one of them has the Lipschitz property. Is $h$ the identity map? 
\end{question}

This paper also provides a positive response to this inquiry.
Using Theorem~\ref{main}, we give an affirmative answer to the question.

\medskip
\begin{theorem}\label{main2}
Suppose $F_{p}$ and $F_{q}$ both preserve the Lebesgue measure for a coprime pair of integers $p>1$ and $q>1$, and one of them has the Lipschitz property. Then $h$ must be the identity map.
\end{theorem}

\begin{proof}
Consider the push-forward measure $\mu=(h^{-1})_{*} \mu_{0}$. Then $\mu$ is $p$- and $q$-invariant since for any $A\in \mathcal{B}$,
\[
\mu (f_{p}^{-1}(A)) = \mu_{0} (h (f_{p}^{-1} (h^{-1} (h(A)))=\mu_{0}(F_{p}^{-1} (h(A))) =\mu_{0} (h(A)) =\mu (A)
\]
and 
\[
\mu (f_{q}^{-1}(A)) = \mu_{0} (h (f_{q}^{-1} (h^{-1} (h(A)))=\mu_{0}(F_{q}^{-1} (h(A))) =\mu_{0} (h(A)) =\mu (A)
\]

Suppose $F_{p}$ has the Lipschitz property. We will check that $\mu$ exhibits balanced geometry concerning the $p$-action. Let $L<1$ be the maximum of the Lipschitz constants of $G_{i}$ for $0\leq i\leq p-1$. 
For any $w_{n-1}=i_{0}i_{1}\cdots i_{n-2}\in \Sigma_{n-1}$ and any $0\leq i\leq p-1$, since  
\[
f^{-1}_{p}(I_{w_{n-1}})=\bigcup_{j=0}^{p-1} I_{jw_{n-1}},
\]
we get 
\[
\sum_{j=0}^{p-1} \mu (I_{jw_{n-1}}) = \mu (I_{w_{n-1}}) =\mu (f_{p}(I_{iw_{n-1}})),
\]
which implies that
\[
1+  \frac{\sum_{0\leq j\not=i\leq p-1} \mu (I_{j w_{n-1}})}{\mu(I_{iw_{n-1}})}=\frac{\mu (f_{p}(I_{iw_{n-1}}))}{\mu(I_{iw_{n-1}})}=
\frac{|h(I_{w_{n-1}})|}{|G_{i}(h(I_{w_{n-1}}))|} \geq \frac{1}{L}.
\]
Thus,
\[
\frac{\sum_{0\leq j\not=i\leq p-1} \mu (I_{j w_{n-1}})}{\mu(I_{iw_{n-1}})}\geq C_{0}=\frac{1}{L}-1>0.
\]
 Now Theorem~\ref{main} implies that $\mu=\mu_{0}$ and $h=\text{Id}$. It completes the proof.
\end{proof}

A problem in rigidity similar to Problem~\ref{cexample} is the following problem.

\medskip
\begin{problem}~\label{p}
Construct a non-trivial homeomorphism $h$ of $\mathbb{T}$ with $h(0)=0$ such that both $F_{p}$ and $F_{q}$ preserve the Lebesgue measure for a coprime pair of integers $p, q>1$ and do not have the Lipschitz property. 
\end{problem} 
 
The Furstenburg conjecture (Conjecture~\ref{fcc}) can be a rigidity conjecture in one-dimensional dynamics. In~\cite{F}, Furstenburg showed that any closed set $S\subseteq \mathbb{T}$ is invariant under multiplication by a non-lacunary semigroup of integers must be either finite or the whole $\mathbb{T}$. Thus, if $\mu$ is $p$- and $q$-invariant measure on $\mathbb{T}$ for a coprime pair of integers $p>1$ and $q>1$, then it is either atomic or the support $supp (\mu)=\mathbb{T}$.  Therefore, if, in addition,  $\mu$ is non-atomic, then $h(x) =\mu ([0,x])$ defines a homeomorphism of $\mathbb{T}$ such that both $F_{p}$ and $F_{q}$ preserve the Lebesgue measure $\mu_{0}$.  Let $< f_{p}, f_{q}>$ be the non-lacunary semi-group generated by $f_{p}$ and $f_{q}$. It is Abelian since 
$$
(f_{p}\circ f_{q}) (x)=(f_{q}\circ f_{p}) (x) =pq x \pmod{1}, \quad x\in \mathbb{T}. 
$$ 
Then $< F_{p}, F_{q}>$ generated by $F_{p}$ and $F_{q}$ is an Abelian semi-group acting on $\mathbb{T}$. We say $\mu$ is $<f_{p}, f_{q}>$ ergodic if it is an ergodic action on the probability space $(\mathbb{T}, \mathcal{B}, \mu)$. It is equivalent to say that $<F_{p}, F_{q}>$ is an ergodic action on the probability space $(\mathbb{T}, \mathcal{B}, \mu_{0})$, in other words, $<F_{p}, F_{q}>$ is ergodic (respective to the Lebesgure measure). 
We have an equivalent statement to the Fursenburg conjecture (Conjecture~\ref{fcc}) in rigidity. 

\medskip
\begin{conjecture}~\label{fc}
 Suppose $F_{p}$ and $F_{q}$ both preserve the Lebesgue measure and $<F_{p}, F_{q}>$ is ergodic for a coprime pair of integers $p>1$ and $q>1$. Then $h$ must be the identity map.
\end{conjecture}
  
Our initial investigation~\cite{AJ}  in this direction is to study rigidity under bounded geometry following our symmetric rigidity work in~\cite{AHJW} under bounded geometry. However, we need to work more on the proof. We list it here as a conjecture. We say $F_{p}$ exhibits bounded geometry if there is a constant $B>0$ such that, for any $w_{n-1} =i_{0}\cdots i_{n-1}\in \Sigma_{n-1}$ and any $0\leq i\leq p-1$, 
$$
\frac{|h(I_{w_{n-1}})|}{|h(I_{w_{n-1}i})|} \leq B.
$$
 
 \medskip
\begin{conjecture}~\label{rbgc}
 Suppose $F_{p}$ and $F_{q}$ both preserve the Lebesgue measure for a coprime pair of integers $p>1$ and $q>1$, and $F_{p}$ exhibits bounded geometry. Then $h$ must be the identity map.
\end{conjecture}

Conjecture~\ref{rbgc} combined with the following conjecture implies Conjecture~\ref{fc}.

\medskip
\begin{conjecture}~\label{qsc}
Suppose $F_{p}$ and $F_{q}$ both preserve the Lebesgue measure 
and $<F_{p},F_{q}>$ is ergodic for a coprime pair of integers $p>1$ 
and $q>1$. Then $h$ is a quasisymmetric homeomorphism. 
\end{conjecture}

In Conjecture~\ref{qsc}, $h$ is a quasisymmetric homeomorphism, meaning that there is a constant $M\geq 1$ so that 
$$
\frac{1}{M} \leq \frac{H(x+t)-H(x)}{H(x)-H(x-t)} \leq M, \quad \forall x\in {\mathbb R}, \;\;\forall\; t>0,
$$
where $H$ is the lift homeomorphism of $h$ to ${\mathbb R}$ such that $H(0)=0$. We have that $H(x+1)=H(x)+1$ for all $x\in {\mathbb R}$. The quasisymmetric property implies bounded geometry.



 \section{\bf Examples and Counter-Example in Balanced Geometry}~\label{enx}
 
We give a non-trivial example of a $p$-invariant measure having balanced geometry (Example~\ref{pex}) and an example of $p$-invariant measure having imbalanced geometry (Example~\ref{nex}). These two examples show that the balanced geometry condition is interesting regarding rigidity.

Suppose $f: \T\to \T$ is a covering map of degree $p$. We call it a circle endomorphism. We say $f$ is $C^{1}$ if it has a continuous derivative $f'$. We say a $C^{1}$ circle endomorphism $f$ expanding if there are two constants $C>0$ and $\lambda >1$ such that
$$
|(f^{n})' (x)| \geq C \lambda^{n} \quad x\in \T.
$$
We say $f$ is $C^{1+\alpha}$ if $f'$ is an $\alpha$-H\"older continuous function for some $0<\alpha\leq 1$, that is, there are two constants $0<a <1$ and $D>0$ such that 
$$
|f'(x) -f'(y)| \leq D|x-y|^{\alpha} \quad x, y \in \T \hbox{ with } |x-y|<a.
$$
Any $C^{1+\alpha}$ expanding endomorphism $f$ has a unique smooth $f$-invariant probability measure $\nu$ (refer to~\cite{JRue,JRig}). This means that we have a positive $\alpha$-H\"older continuous function $\rho (x)$ on $\T$ such that
$$
\nu (A) =\int_{A} \rho (x) dx\quad \hbox{and}\quad \nu (f^{-1} (A)) =\nu (A), \quad \forall A\in \B.
$$
The formula $h_{1}(x)=\int_{0}^{x} \rho(x) dx$ defines a $C^{1+\alpha}$-diffeomorphism of $\T$. One can check that 
$$
g= h_{1} \circ f\circ h_{1}^{-1}
$$
preserves the Lebesgue measure $\m_{0}$.
Furthermore, $g$ is topologically conjugate to $f_{p}$, that is, we have a homeomorphism $h: \T\to \T$ such that
$$
g\circ h= h\circ f_{p}
$$
Let $\m=(h^{-1})_{*} \m_{0}$ defined as $\m(A) =\m_{0} (h(A))$ for all $A\in \B$. In general, $\m$ is totally singular with respective to $\m_{0}$ since, otherwise, $g=f_{p}$ (see~\cite{JRig}). 
Actually, in general, $h\not=id$ is a quasisymmetric homomorphism, and as long as $h$ is symmetric, then $h=id$ (see~\cite{JRig}).  

 \medskip
\begin{example}~\label{pex}
The measure $\m$ constructed above is $p$-invariant and has balanced geometry.
\end{example}

\begin{proof}
For $p$-invariance, it is because that for any $A\in \B$, 
$$
\m(f_{p}^{-1}(A)) =\m_{0} (h(f_{p}^{-1}(A))) =\m_{0} (g^{-1} (h(A))) = \m_{0} (h(A))=\m (A).
$$
 
For balanced geometry, without loss of generality, we assume that
$$
1< m =\min_{x\in \T} |g'(x)| \leq M=\max_{x\in \T} |g'(x)| <\infty.
$$ 
For any $n\geq 2$ and $w_{n-1}\in \S_{n-1}$ and $0\leq i\leq p-1$ , we have that 
$$
f_{p}^{-1} (I_{w_{n-1}}) = I_{iw_{n-1}} \cup \cup_{0\leq j\not=i\leq p-1} I_{jw_{n-1}}.
$$
and 
$$
g^{-1} (h(I_{w_{n}})) =h(I_{iw_{n-1}}) \cup \cup_{0\leq j\not=i\leq p-1} h(I_{jw_{n-1}}).
$$
Since $g$ preserves the Lebesgue measure $\m_{0}$, we have that
$$
|h(I_{w_{n-1}})| =|g^{-1} (h(I_{w_{n}}))| = |h(I_{iw_{n-1}})| + \sum_{0\leq j\not=i\leq p-1} |h(I_{jw_{n-1}})|.
$$
Since $g(h(I_{iw_{n-1}})) = h(I_{w_{n-1}})$, we have that, for any $j\not=i$,
$$
\frac{|g(h(I_{iw_{n-1}}))|}{|h(I_{iw_{n-1}})|} =1+ \frac{|h(I_{jw_{n-1}})|}{|h(I_{iw_{n-1}})|} +\sum_{0\leq k\not=i, j\leq p-1} 
\frac{|h(I_{kw_{n-1}})|}{|h(I_{iw_{n-1}})|},
$$
which implies that
$$
 \frac{|h(I_{jw_{n-1}})|}{|h(I_{iw_{n-1}})|} \Big( 1+\sum_{0\leq k\not=i, j\leq p-1} 
\frac{|h(I_{kw_{n-1}})|}{|h(I_{jw_{n-1}})|} \Big) 
= \frac{|g(h(I_{iw_{n-1}}))|}{|h(I_{iw_{n-1}})|} -1 \geq m -1.
$$

Since each 
$$
\frac{|h(I_{kw_{n-1}})|}{|h(I_{jw_{n-1}})|}
=
\frac{\frac{|h(I_{w_{n-1}})|}{|h(I_{jw_{n-1}})|}}
{\frac{|h(I_{w_{n-1}})|}{|h(I_{jw_{n-1}})}|}
=
\frac{\frac{|g(h(I_{kw_{n-1}}))|}{|h(I_{jw_{n-1}})|}}
{\frac{|g(h(I_{iw_{n-1}}))|}{|h(I_{jw_{n-1}})}|} \leq \frac{M}{m},
$$
$$
 \frac{\mu (I_{jw_{n-1}})}{\mu (I_{iw_{n-1}})} = \frac{|h(I_{jw_{n-1}})|}{|h(I_{iw_{n-1}})|} \geq C_{0}=\frac{m-1}{1+(p-2)\frac{M}{m}}>0,
 $$
which says that $\m$ has balanced geometry. 

\end{proof}

Next, we construct a $p$-invariant measure having imbalanced geometry. 
For a given $0<\alpha<1$, let $f$ be a $C^{1+\alpha}$ almost expanding circle endomorphism of degree $2$, that is, $f(0)=0$ and $f'(0)=1$ and $|f'(x)|>1$ for all $x\not= 0\in \T$. Assume $f(x) =x +x^{1+\alpha} +o(x^{1+\alpha})$ in a small neighborhood of $0$. Then $f$ has an $f$-invariant probability measure $\m$ absolutely continuous with respect to the Lebesgue measure $\m_{0}$ such that the density $\rho (x) =d\m/d\m_{0} (x)$ is continuous on $(0,1]$ but $\rho (x) \to \infty$ as $x \to 0$ (refer to~\cite{Hu}). 
The formula $h_{1}(x)=\int_{0}^{x} \rho(x) dx$ defines a homeomorphism of $\T$ and $C^{1}$ on $(0,1)$. The map 
$$
g= h_{1} \circ f\circ h_{1}^{-1}
$$
preserves the Lebesgue measure $\m_{0}$ and $g(0)=0$ and $g'(0)=1$. It is topologically conjugate to $f_{p}$. That is, there is a homeomorphism $h: \T\to \T$ such that
$$
g\circ h= h\circ f_{p}
$$
Let $\m=(h^{-1})_{*} \m_{0}$ defined as $\m(A) =\m_{0} (h(A))$ for all $A\in \B$.

\medskip
\begin{example}~\label{nex}
The measure $\m$ is $p$-invariant and has imbalanced geometry.
\end{example}

\begin{proof}
For $p$-invariance, it is because that for any $A\in \B$, 
$$
\m(f_{p}^{-1}(A)) =\m_{0} (h(f_{p}^{-1}(A))) =\m_{0} (g^{-1} (h(A))) = \m_{0} (h(A))=\m (A).
$$
 
For any $n\geq 2$ and $0_{n-1}=\underbrace{0\cdots 0}_{n-1} \in \S_{n-1}$ and $i=0$ , we have that 
$$
f_{p}^{-1} (I_{0_{n-1}}) = I_{0_{n}} \cup \cup_{0<i\leq p-1} I_{i0_{n-1}}.
$$
and 
$$
g^{-1} (h(I_{0_{n-1}})) =h(I_{0_{n}}) \cup h(\cup_{0<i\leq p-1}I_{i0_{n-1}}).
$$
Since $g$ preserves the Lebesgue measure $\m_{0}$, we have that
$$
|h(I_{0_{n-1}})| =|g^{-1} (h(I_{0_{n}}))| = |h(I_{0_{n}})| + \sum_{i=1}^{p-1} |h(I_{i0_{n-1}})|
$$
and that 
$$
 \frac{|h(I_{0_{n-1}})|}{|h(I_{0_{n}})|} = 1+
\frac{\sum_{i=1}^{p-1} |h(I_{i0_{n-1}})|}{|h(I_{0_{n}})|} 
$$
Since $g(h(I_{0_{n}}))=h(I_{0_{n-1}})$ and $g'(0)=1$, 
$$
\frac{\sum_{i=1}^{p-1} |h(I_{i0_{n-1}})|}{|h(I_{0_{n}})|} \to 0 \quad \hbox{as} \quad n\to \infty,
$$  
which says that $\mu$ has imbalanced geometry. 
 \end{proof}

\section{\bf Appendix I: A Result in Number Theory}~\label{rnt}

In Appendix I, we give a detailed proof of a result in number theory (Lemma~\ref{nt}). We used the result in the proof of Theorem~\ref{main}. Another way to see this result is from the Carmichael function $\lambda(n)$ in number theory. 

\medskip
\begin{lemma}~\label{nt} 
Suppose $p>1$ and $q>1$ are two relatively prime integers. Suppose $a\geq 1$ is an integer. 
Let 
$$
T_{n}=\#\Big(\{ aq^k \pmod{p^n}\;\;|\; k=0, 1, \cdots\}\Big)
$$
Then there is a constant $C_{1}=C_{1}(a,p,q)>0$ and a constant integer $n_{0}=n_{0}(a, p, q)$ such that 
$$
T_n=C_{1}p^n, \;\;\forall \; n\geq n_0.
$$
\end{lemma}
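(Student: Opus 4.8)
The plan is to analyze the orbit of $a$ under multiplication by $q$ in the ring $\Z/p^n\Z$, using the fact that $q$ is invertible there since $\gcd(p,q)=1$. First I would observe that $T_n$ is exactly the length of the (purely periodic) cycle of the map $x\mapsto qx$ acting on the orbit $\{aq^k \pmod{p^n}\}$; because $q$ is a unit mod $p^n$, the sequence $aq^k$ is purely periodic with period $T_n = \mathrm{ord}_{p^n/d_n}(q)$, where $d_n = \gcd(a,p^n)$. Since $a$ is a fixed integer, $d_n$ stabilizes: writing $a = p^{s}a'$ with $\gcd(a',p)=1$, we get $d_n = p^{\min(s,n)} = p^s$ for all $n \geq s$. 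Hence for $n \geq s$ the count is $T_n = \mathrm{ord}_{p^{n-s}}(q)$, reducing everything to understanding the multiplicative order of $q$ modulo prime-power moduli $p^{n-s}$.

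The key step is then the standard ``lifting-the-exponent'' structure of multiplicative orders modulo $p^m$. Let $d = \mathrm{ord}_p(q)$ be the order of $q$ mod $p$ (for $p$ an odd prime; the prime $p=2$ I would treat as a routine separate case). The elementary fact is that there is an integer $m_0 \geq 1$ such that $\mathrm{ord}_{p^m}(q) = d\cdot p^{m-m_0}$ for all $m \geq m_0$: once the $p$-adic valuation of $q^d - 1$ is ``activated,'' each further power of $p$ in the modulus multiplies the order by exactly $p$. (Concretely, $m_0 = v_p(q^d-1)$.) When $p$ is not prime, say $p = \prod p_i^{a_i}$, I would apply this to each prime power $p_i^{a_i(n-s)}$ separately and combine via the Chinese Remainder Theorem / least common multiple: for $n$ large, $\mathrm{ord}_{p^{n-s}}(q) = \mathrm{lcm}_i \mathrm{ord}_{p_i^{a_i(n-s)}}(q)$, and each factor grows by a constant times $p_i^{a_i(n-s)-\text{const}}$, so the lcm grows by a constant times $p^{n-s}$ once $n$ is large enough that all the ``stabilization thresholds'' $m_{0,i}$ are exceeded.

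Putting this together: there exist a constant $C_1 = C_1(a,p,q) > 0$ and $n_0 = n_0(a,p,q)$ such that $T_n = C_1 p^n$ for all $n \geq n_0$; explicitly $C_1 p^{n_0}$ is the stabilized value $\mathrm{ord}_{p^{n_0-s}}(q)\cdot p^{s}/p^{n_0}$ times $p^{n_0}$, and multiplying the modulus by $p$ multiplies $T_n$ by $p$ once past threshold. The main obstacle — really the only subtle point — is handling composite $p$ cleanly: one must be careful that the lcm of the orbit lengths for the distinct prime-power factors, each of which is eventually a constant times the corresponding prime power, is itself eventually a constant times $p^n$; this requires knowing that after the threshold the ``extra'' factor of $p$ distributes so that the lcm scales by the full $p$ and not some proper divisor. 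I would dispatch this by noting that for $n$ past all thresholds the order modulo $p_i^{a_i(n-s)}$ is $e_i \cdot p_i^{a_i(n-s) - m_{0,i}}$ with $e_i \mid (p_i-1)$ fixed, so $\mathrm{lcm}_i$ equals $\big(\mathrm{lcm}_i\, e_i p_i^{\,a_i m_{0,i}^{-}}\big)\cdot \prod_i p_i^{a_i(n-s)}/p^{\text{const}}$, which is manifestly a fixed constant times $p^{n-s}$, hence times $p^n$.
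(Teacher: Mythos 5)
Your first reduction coincides with the paper's: both identify $T_n$ (for large $n$) with the multiplicative order of $q$ modulo $p^n$ stripped of the part of $a$ shared with $p$, i.e.\ $T_n=\mathrm{ord}_{p^{n-r}}(q)$. After that the routes genuinely diverge. The paper stays with the composite modulus and runs a self-contained induction: writing $q^{T}=u_0p^{s+1}+1$ with $\gcd(u_0,p)=1$, it expands $(u_0p^{s+1}+1)^p$ by the binomial theorem to get $q^{pT}=u_1p^{s+2}+1$ with $u_1\equiv u_0\pmod p$, so the order multiplies by exactly $p$ at each level past $n_0=r+s+1$. You instead factor $p$ into prime powers, quote the standard stabilization $\mathrm{ord}_{p_i^{m}}(q)=e_i\,p_i^{\,m-m_{0,i}}$ (with $p_i=2$ as a separate routine case), and recombine with an lcm; your remark that each $e_i$ is coprime to $p_i$ and bounded is exactly what is needed to see that, past all thresholds, the lcm gains the full factor $p$ per level, so your argument goes through (the garbled final formula notwithstanding). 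As for what each buys: the paper's binomial induction is elementary and avoids any appeal to CRT or the order-lifting lemma, but it is delicate precisely where your route is robust --- when $s=0$ and $p$ is even the claim $u_1\equiv u_0\pmod p$ fails (e.g.\ $p=2$, $q=3$: $3^{1}=1\cdot 2+1$ but $3^{2}=2\cdot 4+1$, and indeed $\mathrm{ord}_8(3)=2\neq 2\,\mathrm{ord}_4(3)$), which is exactly the $p=2$ anomaly your standard-theory approach absorbs. One shared slip: both you and the paper write $a=p^{r}a'$ (resp.\ $a=bp^{r}$) with the cofactor coprime to $p$, a decomposition that need not exist when $p$ is composite (take $a=2$, $p=6$); in your prime-by-prime framework the repair is immediate (remove $p_i^{\min(v_{p_i}(a),\,a_i n)}$ from each prime power separately), which is a further point in favor of your decomposition, though it should be stated explicitly.
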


\begin{proof}
Let $a=bp^r$ with $\gcd(b,p)=1$. For every $n\geq r+1$, consider two integers $k>l$ 
such that $aq^k=aq^l \pmod{p^n}$. That is, 
$$
aq^l(q^{k-l}-1)= m p^n
$$
for some integer $m\geq 0$, 
which implies that 
$$
b q^l(q^{k-l}-1)=mp^{n-r}
$$
Since $b$ and $q$ are both relatively prime to $p$, we have $p^{n-r}|(q^{k-l}-1)$, that is,
$$
q^{k-l}=1\pmod{p^{n-r}}.
$$
Thus $T_{n}$ is the smallest positive integer such that $q^{T_{n}}=1\mod p^{n-r}$. 

Suppose $q^{T_n}=k_{n} p^{n-r}+1$ for some integer $k_{n}\geq 1$. 
For $n=r+1$, we have
$q^{T_{r+1}}=k_{r+1}p+1$.
Let us write $k_{r+1}=u_0p^s$, where $(u_0,p)=1$. Then
$$
q^{T_{r+1}}=u_0p^{s+1}+1=u_0p^{(r+s+1)-r}+1=q^{T_{r+s+1}},
$$
which implies
$$
q^{pT_{r+s+1}}=(u_0p^{s+1}+1)^p
$$
$$
=\sum_{j=0}^p\left(\begin{array}{l}p\cr                           j
\end{array}\right)(u_0p^{s+1})^{p-j} =(u_0p^{s+1})^p+p(u_0p^{s+1})^{p-1}+\ldots+p(u_0p^{s+1})+1
$$
$$
=u_1p^{s+2}+1.
$$
Note that
$u_1=u_0+p(\cdots)=u_0\pmod{p}$.
Since $(u_0,p)=1$, we have $(u_1,p)=1$ and thus
$$
u_1p^{s+2}+1=q^{T_{r+s+2}}.
$$
Therefore,
$$
pT_{r+s+1}=T_{r+s+2}.
$$
Inductively we get
$$
T_{r+s+1+m}=p^mT_{r+s+1}, \;\;\forall \; m\geq 0.
$$
Setting $n_0=r+s+1$ we have
$$
T_{n_0+m}=p^mT_{n_0}, \;\;\forall \; m\geq 0.
$$
That is,
$$
\frac{T_{n_0+m}}{p^{n_0+m}}=\frac{T_{n_0}}{p^{n_0}},\;\;\forall m\geq 0.
$$
Therefore, let
$$
C_{1}=\frac{T_{n_0}}{p^{n_0}}.
$$
Then
\[T_n=C_{1}p^n, \quad\forall n\geq n_0.\]
\end{proof}
 
 \section{\bf Appendix II: Martingale Convergence Theorem}~\label{mctsect} 
 
Appendix II states the martingale convergence theorem (Lemma~\ref{mct}). We used it in the proof of Theorem~\ref{main}.

Consider $\P=(\T, \B, \nu_{n})$ as a probability space where $\nu_{n}$ is the probability measure constructed in the proof of Theorem~\ref{main}. Consider $X=\phi_{n}$ as a random variable on $\P$. Let $\B_{k}$ be the $\s$-field generated 
by $\eta_{k} =\{ I_{w_{k}} | w_{k}\in \S_{k}\}$ for every $k\geq 1$. Then we have a sequence of increasing $\s$-fields 
$$
\B_{1} \subset \B_{2}\subset \cdots \subset \B_{k}\subset \B_{k+1}\subset \cdots\subset \B
$$
and $\B=\cup_{k=1}^{\infty} \B_{k}$. Let $X_{k} =E(X|\B_{k})$ be the expectation of $X$ on $\B_{k}$, which is a $\B_{k}$-measurable function. Then we have $X_{k} = E(X_{k+1}| \B_{k})$, $\nu_{n}$-a.e.. Thus, 
$
\{ (X_{k}, \B_{k})\}_{k=1}^{\infty}
$
gives us a martingale on $\P$ and 
$$
X_{k} (x) =\frac{ \int_{I_{w_{k}}} Xd\nu_{n}} { \nu_{n} (I_{w_{k}})} = \frac{\m (I_{w_{k}})} {\nu_{n} (I_{w_{k}})}\quad \hbox{for $\nu_{n}$-a.e. $x\in I_{w_{k}}$}.
$$
Now we apply Theorem 5.7 in~\cite{V} to our case.

 \medskip
 \begin{lemma}[Martingale Convergence Theorem]~\label{mct}
 Since $X\in L_{1}(\nu_{n})$, the martingale $\{X_{k}=E(X|\B_{k})\}_{k=1}^{\infty}$ converges to $X$ for almost all points with respect to $\nu_{n}$ as $k$ goes to $\infty$.  
 \end{lemma}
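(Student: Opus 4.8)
The plan is to identify $\{(X_{k},\B_{k})\}_{k\geq 1}$ as a \emph{closed} (or regular) martingale generated by the single integrable function $X=\phi_{n}$, and to invoke Doob's forward convergence theorem (Lévy's upward theorem) for such martingales. All the structural input is already in place: the filtration $\B_{1}\subset\B_{2}\subset\cdots$ is increasing, $\B=\cup_{k}\B_{k}$, the martingale identity $X_{k}=E(X_{k+1}\,|\,\B_{k})$ holds $\nu_{n}$-a.e., and the explicit formula $X_{k}(x)=\mu(I_{w_{k}})/\nu_{n}(I_{w_{k}})$ on each $I_{w_{k}}\in\eta_{k}$ is available. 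The only thing I would isolate at the outset is the uniform $L_{1}$ bound: by the conditional Jensen inequality and the tower property,
$$
\|X_{k}\|_{L_{1}(\nu_{n})}=E\big(|E(X\,|\,\B_{k})|\big)\leq E\big(E(|X|\,|\,\B_{k})\big)=E|X|=\|X\|_{L_{1}(\nu_{n})},
$$
so $\sup_{k}\|X_{k}\|_{1}<\infty$.

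With $L_{1}$-boundedness secured, the analytic heart of the argument is Doob's upcrossing inequality. For each pair of rationals $a<b$ one bounds the expected number of upcrossings of $[a,b]$ by $X_{1},\dots,X_{k}$ in terms of $\|X_{k}\|_{1}$; letting $k\to\infty$ and ranging over the countably many rational pairs forces the sequence $X_{k}(x)$ to exhibit no genuine oscillation for $\nu_{n}$-a.e.\ $x$. Hence $X_{k}(x)$ converges to a limit $X_{\infty}(x)\in[-\infty,+\infty]$ almost everywhere, and Fatou's lemma together with the $L_{1}$ bound makes $X_{\infty}$ finite a.e.\ and integrable.

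It remains to identify $X_{\infty}$ with $X$. Because $X_{k}=E(X\,|\,\B_{k})$ is a closed martingale, we have $|X_{k}|\leq E(|X|\,|\,\B_{k})$, and the family $\{E(|X|\,|\,\B_{k})\}_{k}$ is uniformly integrable (a standard consequence of $|X|\in L_{1}$); hence $\{X_{k}\}$ is uniformly integrable. This upgrades the a.e.\ convergence to $L_{1}$ convergence and yields $X_{\infty}=E(X\,|\,\B_{\infty})$, where $\B_{\infty}=\sigma\!\big(\cup_{k}\B_{k}\big)$. Finally, the endpoints of the intervals $I_{w_{k}}$ are exactly the $p$-adic rationals $j/p^{k}$, which are dense in $\T$; hence the partitions $\eta_{k}$ separate points and generate the full Borel field, so $\B_{\infty}=\B$. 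Since $X=\phi_{n}$ is $\B$-measurable, $E(X\,|\,\B)=X$, and therefore $X_{\infty}=X$ at $\nu_{n}$-a.e.\ point, which is the assertion.

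The step I expect to be the main obstacle is the upcrossing estimate: it is precisely the mechanism that rules out oscillation and thereby yields convergence \emph{almost everywhere} rather than merely in $L_{1}$ or in probability. By contrast, the $L_{1}$ bound, the uniform integrability of a closed martingale, and the density argument showing $\B_{\infty}=\B$ are routine. An alternative to the upcrossing machinery, specific to the present setting, would be to view $X_{k}(x)=\mu(I_{w_{k}})/\nu_{n}(I_{w_{k}})$ as ratios of the measures $\mu\ll\nu_{n}$ over the nested intervals shrinking to $x$ and to appeal to a Lebesgue--Besicovitch differentiation theorem for the interval basis $\{\eta_{k}\}$; but the martingale route is cleaner and is exactly what the cited Theorem~5.7 in~\cite{V} provides.
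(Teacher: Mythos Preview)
Your outline is correct and is precisely the standard proof of L\'evy's upward theorem: $L_{1}$-boundedness of a closed martingale, Doob's upcrossing inequality to obtain a.e.\ convergence, uniform integrability of $\{E(X\,|\,\B_{k})\}$ to identify the limit as $E(X\,|\,\B_{\infty})$, and the observation that the dyadic (here $p$-adic) partitions generate the full Borel field so that $\B_{\infty}=\B$. Each step is handled properly, including the small care you take in writing $\B_{\infty}=\sigma(\cup_{k}\B_{k})$ rather than the loose $\B=\cup_{k}\B_{k}$ that appears in the paper.

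By way of comparison: the paper does not actually prove this lemma. It simply records the martingale structure and then invokes Theorem~5.7 of Varadhan~\cite{V} as a black box. So you have supplied what the paper outsources; your argument is exactly the proof one finds behind that citation. The alternative you mention at the end---viewing $X_{k}(x)=\mu(I_{w_{k}})/\nu_{n}(I_{w_{k}})$ as a differentiation quotient and appealing to a Lebesgue--Besicovitch theorem for the nested interval basis---would also work here and is arguably closer in spirit to how the lemma is actually \emph{used} in the proof of Theorem~\ref{main} (via~(\ref{bd})), but the martingale route is the one the paper points to.
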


\medskip
\medskip

\noindent {\bf Acknowledgment:} I would like to thank Professors Huyi Hu, Yun Yang, Wenbo Sun, Jiayu Chen, and my student Yuan (Jessica) Liu for listening, having helpful conversations, and pointing out 
some typos on this work. 
This work is supported partially by a PSC-CUNY Enhanced award (66680-54), Simons Foundation awards (523341 and
942077), and an NSFC grant (12271185).

\medskip

 \end{document}